\newtheorem{remark}{Remark}
\newtheorem{theorem}{Theorem}
\newtheorem{lemma}{Lemma}
\newenvironment{proof}{{Proof:}}{\hfill$\square$}
\begin{document}

\title{Bilateral Boundary Control of Moving Shockwave in LWR Model of Congested Traffic} 

\author{Huan Yu,~\IEEEmembership{Student Member,~IEEE,} Mamadou Diagne,~\IEEEmembership{Member,~IEEE,} Liguo Zhang~\IEEEmembership{Member,~IEEE,} \\Miroslav Krstic,~\IEEEmembership{Fellow,~IEEE}
\thanks{Huan Yu and Miroslav Krstic are with the Department of Mechanical and Aerospace Engineering,
	University of California, San Diego, 9500 Gilman Dr, La Jolla, CA 92093.
	{Email: huy015@ucsd.edu, krstic@ucsd.edu}}%
\thanks{Mamadou Diagne is with the Department of Mechanical Aerospace and Nuclear Engineering, Rensselaer Polytechnic Institute, New York, 12180, USA. {Email: diagnm@rpi.edu}}
\thanks{Liguo Zhang is with the School of Electronic Information and Control Engineering, Beijing University of Technology, Beijing, 100124, China. Email: zhangliguo@bjut.edu.cn
}}

\maketitle	
	
\begin{abstract}                
We develop backstepping state feedback control to stabilize a moving shockwave in a freeway segment under bilateral boundary actuations of traffic flow. A moving shockwave, consisting of light traffic upstream of the shockwave and heavy traffic downstream, is usually caused by changes of local road situations. The density discontinuity travels upstream and drivers caught in the shockwave experience transitions from free to congested traffic. Boundary control design in this paper brings the moving shockwave front to a static setpoint position, hindering the upstream propagation of traffic congestion. The traffic dynamics are described with Lighthill-Whitham-Richard (LWR) model, leading to a system of two first-order hyperbolic partial differential equations (PDEs). Each represents the traffic density of a spatial domain segregated by the moving interface. By Rankine-Hugoniot condition, the interface position is driven by flux discontinuity and thus governed by a PDE state dependent ordinary differential equation (ODE). For the PDE-ODE coupled system. the control objective is to stabilize both the PDE states of traffic density and the ODE state of moving shock position to setpoint values. Using delay representation and backstepping method, we design predictor feedback controllers to cooperatively compensate state-dependent input delays to the ODE. From Lyapunov stability analysis, we show local stability of the closed-loop system in $H^1$ norm. The performance of controllers is demonstrated by numerical simulation.	
\end{abstract}

\begin{IEEEkeywords}
Backstepping control, State-dependent delay compensation, PDE-ODE coupled system, Moving shockwave, LWR traffic model
\end{IEEEkeywords}


\section{Introduction}\label{intro}	

Consider a common phenomenon in freeway traffic when there is a moving shockwave consisting of light traffic upstream of the shockwave and heavy traffic downstream. The shockwave conserves traffic flow at the interface of discontinuity and is caused by local changes of road situations like uphill and downhill gradients, curves, change of speed limits. The upstream propagation of the moving shockwave causes more and more vehicles entering into the congested traffic. The abrupt transition from free to congested traffic at the moving interface leads to unsafe driving conditions and increased fuel consumptions. It is of great importance if we can halt the upstream propagation and drive the moving interface to a desirable location where the traffic congestion could be discharged by traffic management infrastructures on freeways. Ramp metering and varying speed limit are most widely used to control traffic flux or velocity from the boundary of a stretch of freeway so that desirable traffic states could be achieved for the inner domain of the freeway segment.

In developing boundary control strategies through ramp metering and varying speed limit, many recent efforts ~\cite{Belletti:15},\cite{Karafyllis:18},\cite{Huan:19},\cite{Huan1:18},\cite{Huan:full},\cite{Huan:adapt},\cite{Huan:18},\cite{Liguo:19} are focused on macroscopic traffic models governed by PDE system. These model-based controllers  regulate the evolution of traffic densities and velocities in order to dissipate traffic congestions on freeways. For instance, \cite{Huan:19},\cite{Huan1:18} achieve $L^2$ norm stabilization of stop-and-go traffic by nonlinear second-order PDE traffic model using boundary control. 

Traffic discontinuity can be caused by various inhomogeneities of freeway or vehicles. Some studies consider it as a moving traffic flux constraint~\cite{Monache:14},\cite{Villa:16} due to a reduction of road capacity. Slow moving vehicles, also known as moving bottlenecks, are represented in~\cite{Borsche:12},\cite{Lattanzio:11},\cite{Huan:18} with ODEs governing the velocity of slow vehicles. These are out of the scope of this paper and relevant to the controllability problem with boundary actuation. In this paper, we consider the situation where road capacity is conserved but shockwaves form due to uphills, downhills, and curves of the road. Higher density traffic appears downstream of the shockwave front and the front of density discontinuity keeps moving upstream, driven by the flux discontinuity. The upstream propagation of the moving shockwave causes traffic congestion forming up on a freeway.

In this work, we adopt the seminal Lighthill, Whitham and Richards (LWR) model to describe the traffic dynamics of the moving shockwave problem. The LWR model is a first-order, hyperbolic macroscopic PDE model of traffic density. It is simple yet very powerful to describe the formation, dissipation and propagation of traffic shockwaves on a freeway. The moving shockwave consists of upstream, downstream traffic and a moving interface. The upstream and downstream traffic densities are governed by LWR PDE models and the interface position is governed by Rankine-Hugoniot jump condition, leading to a density state-dependent nonlinear ODE. Therefore, we are dealing with a PDE-ODE coupled system, where ODE state is dependent on PDE states at the moving interface. The traffic flow is actuated at both boundaries of a freeway segment and can be realized with ramp-metering. The control objective is to drive the moving interface to certain location and traffic states to steady values through bilateral boundary controls.

Boundary control of PDE with state-dependent ODE systems has been intensively studied over the past few years. Backstepping control design method is used in solving these problems. In parabolic PDE system, the problem is known as Stefan problem with application to control of screw extruder for 3D Printing~\cite{Koga:18} and arctic sea ice temperature estimation~\cite{Koga:17}. In hyperbolic PDE system, theoretical results have been studied by \cite{Nik:13a},\cite{Nik:13b},\cite{Nik:17},\cite{Diagne:17},\cite{Tsuba:16}. With application, \cite{Buisson:18} develops boundary control piston position in inviscid gas and \cite{Diagne:15} develops the control of a mass balance in screw extrusion process. Other applications include vibration suppression of mining cable elevator~\cite{Wang:18}, control of Saint-Venant equation with hydraulic jumps~\cite{Coron:07}. However, the application of the methodology in traffic problem has never been discussed before.

The contribution of this paper is twofold. This is the very first theoretical result on control of two PDE state-dependent input delays to ODE. Predictor-based state feedback design approach is adopted following~\cite{Diagne:17},\cite{Tsuba:16}. In fact, \cite{Tsuba:16} shows a predictor feedback design for multiple constant delayed inputs to linear time-invariant systems while   \cite{Diagne:17} considers a single implicitly defined state-dependent input delay to nonlinear time-invariant systems alternatively written as a PDE-ODE cascade system. In this work, we firstly present the predictor feedback design for two PDE states dependent input delays to ODE. 
On the other hand, control problem of traffic moving shockwave has never been addressed before to author's best knowledge. 

The outline of this paper: 	
we introduce the LWR model to describe the moving shockwave problem. Then we linearized the coupled PDE-ODE model around steady states. The predictor state feedback control design follows and using Lyapunov analysis, we prove the local exponential stability of the closed-loop system. Model validity is guaranteed with the control design. In the end, the result is validated with numerical simulations.

\section{Problem statement}
\begin{figure}[t!]
	\centering
	\includegraphics[width=6.8cm]{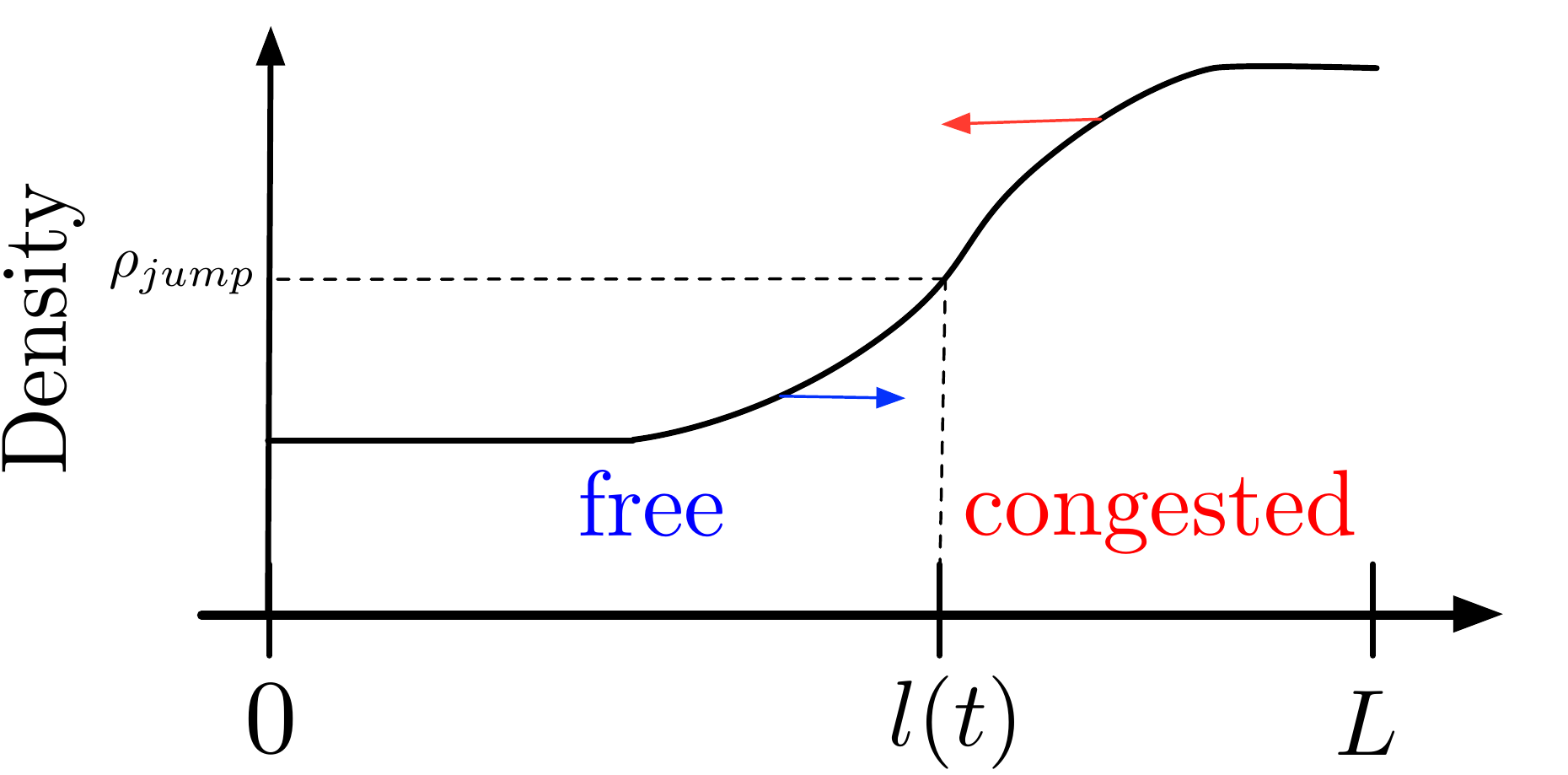}
	\caption{Traffic moving shockwave front on freeway, the arrows represent propagation directions of density variations. In LWR model, the propagation directions are given by the characteristic speeds of density $Q'(\rho)$. }\label{fig1}
\end{figure} 

The moving shockwave front is the head of a shockwave, segregating traffic on a segment of freeway into two different schemes. The upstream traffic of the shockwave front is in free regime and the downstream is in congested regime, as shown in Fig.1. The traffic densities are described with the first-order macroscopic LWR model. 

\subsection{LWR traffic model}
In LWR model, traffic density $\rho(x,t)$ is governed by the following first-order nonlinear hyperbolic PDE, where $x\in[0,L]$, $t\in[0,\infty)$, 
\begin{align}
\partial_t \rho + Q'(\rho)\partial_x \rho =& 0,\label{lwr}
\end{align}
where $Q(\rho)$ is a fundamental diagram which shows the relation of equilibrium density and traffic flux. The fundamental diagram $Q(\rho)$ is defined as $Q(\rho) = \rho V(\rho)$. The equilibrium velocity $V(\rho)$ is a decreasing function of density. We choose the following Greenshield's model for $V(\rho)$ in which velocity is a linear decreasing function of density.
\begin{align}
	V(\rho) = v_m \left(1 - \frac{\rho}{\rho_m}\right).\label{Green}
\end{align}
where ${v_{m}}$ is the maximum speed, ${\rho_{m}}$ is the maximum density. Greenshield's model $V(\rho)$ yields that the fundamental diagram $Q(\rho)$ is a quadratic map, shown in figure Fig. \ref{fig2}. The jump density $\rho_{\rm jump}$ segregates densities into two sections, the density smaller than $\rho_{jump}$ is defined as free-regime while the density greater than $\rho_{\rm jump}$ is defined as congested regime. 
\begin{figure}
\centering
 \includegraphics[width=6cm]{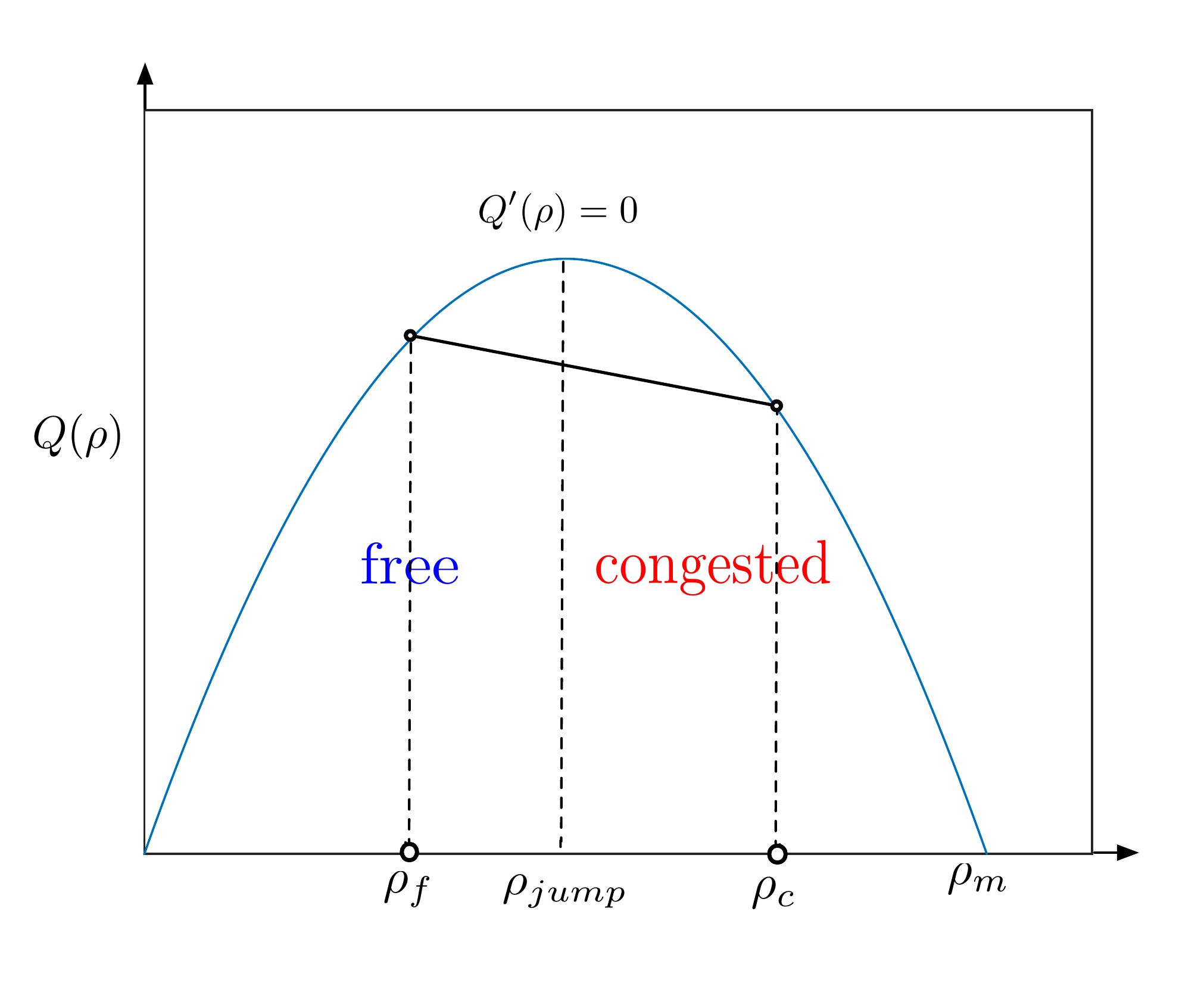}
 \caption{Fundamental digram of traffic density and traffic flux relation}\label{fig2}
\end{figure}

In the LWR PDE \eqref{lwr}, density variations propagate with the characteristic speed $ Q'(\rho)$. 
The free regime with light traffic, equivalently, $\rho_{\rm f} < \rho_{\rm jump}$, has its density variations transported  downstream with
\begin{align}
	 Q'(\rho)|_{\rho=\rho_{\rm f}} = V(\rho_{\rm f}) + \rho_{\rm f} V'(\rho_{\rm f}) > 0,
\end{align}
while the congested regime with denser traffic, namely, $\rho_{\rm c} > \rho_{\rm jump}$ has its density variations transported upstream with
\begin{align}
	 Q'(\rho)|_{\rho=\rho_{\rm c}} = V(\rho_{\rm c}) + \rho_c V'(\rho_{\rm c}) < 0.
\end{align}
As shown in figure Fig. \ref{fig1}, the moving shockwave considered here is the shock of a traffic wave which physically represents the discontinuity of density. The congested traffic density propagates upstream while the light traffic density propagates downstream. Therefore, the upstream front of the shockwave becomes steeper in propagation and  eventually, the gradient $\partial_x \rho$ tends to be infinity  \cite{Treiber:14}. In this context, drivers located  in the upstream front of the shock will experience transition from free to congested traffic. The position of the shockwave front is later defined by an ODE according to Rankine-Hugoniot condition.

\subsection{ Moving shockwave model}
The moving shockwave model consists of upstream, downstream traffic densities and a moving interface located at the  density discontinuity spatial coordinate.
The dynamics of the upstream free traffic, the downstream congested traffic and the position of the moving interface are presented below, respectively. 

Define the traffic density of the congested regime as $\rho_{\rm c}(x,t)$ for $x\in [0,l(t)]$, $t\in [0, +\infty]$, and the free regime as $\rho_{\rm f}(x,t)$, for $x\in [l(t), L]$, $t\in [0, +\infty] $, the LWR model that describes the traffic is given by
\begin{align}
\partial_t \rho_{\rm f}+ \partial_x (\rho_{\rm f} v_{\rm f})=& 0, \quad x\in [0,l(t)] \label{f}  \\
\partial_t \rho_{\rm c}+ \partial_x (\rho_{\rm c} v_{\rm c})=& 0, \quad x\in [l(t), L]  \label{c} 
\end{align}
where $l(t) \in [0,L]$ is the location of moving interface. The density and velocity relation is given by Greenshield's model in \eqref{Green}, $(i = \rm f, c)$,
\begin{align} 
v_i(x,t)=& V_i(\rho_i(x,t))= {v_{m}} \left(1-\frac{\rho_i(x,t)}{\rho_{m}}\right). \label{vr}
\end{align}
Due to the flux discontinuity at the moving boundary, a traveling vehicle leaves the free regime to enter the congested regime. Dynamics of moving interface $l(t)$ is derived under the Rankine-Hugoniot condition which guarantees that the mass of traffic flow is conserved at the moving interface. The upstream propagation of the shockwave front is driven by the flux discontinuity.  

\begin{align}\label{Interface}
\dot l(t)=& \frac{\rho_{\rm c}(l(t),t)v_{\rm c}(l(t),t)-  \rho_{\rm f}(l(t),t)v_{\rm f}(l(t),t)}{\rho_{\rm c}(l(t),t)-\rho_{\rm f}(l(t),t)},
\end{align}
where the initial position of the shockwave front $0<l(0)<L$. {The following inequalities for initial conditions of PDEs \eqref{f},\eqref{c} are assumed }
\begin{align}
\rho_{\rm c}(l(0),0)v_{\rm c}(l(0),0) <& \rho_{\rm f}(l(0),0)v_{\rm f}(l(0),0),\\
\rho_{\rm c}(l(0),0)>&\rho_{\rm f}(l(0),0).
\end{align}
Initially, the  traffic  downstream the interface is denser but with a smaller flux which lets less vehicles to pass through while the traffic upstream is light and let more vehicles to come in the segment. With the above assumptions to hold, we obtain from \eqref{Interface} that $\dot l(0) < 0$.
The moving interface is traveling upstream and is driven by a flux difference induced by the density discontinuity.

Substituting density-velocity relation in \eqref{vr} into  \eqref{f},\eqref{c}, and  \eqref{Interface}, we have
two nonlinear PDEs and an ODE coupled system describing the dynamics of  $\rho_{\rm f}(x,t)$, $\rho_{\rm c}(x,t)$ and $l(t)$ given by
\begin{align}
\partial_t \rho_{\rm f}(x,t)=& - v_m\partial_x \left(\rho_{\rm f}(x,t)-\frac{\rho_{\rm f}^2(x,t)}{\rho_{m}}\right), \label{rf}\\
\partial_t \rho_{\rm c}(x,t)=&- v_m\partial_x \left(\rho_{\rm c}(x,t)-\frac{\rho_{\rm c}^2(x,t)}{\rho_{m}}\right), \label{rc}\\
 \dot l(t)
=&v_m - \frac{v_m}{\rho_m}(\rho_{\rm c}(l(t),t)+\rho_{\rm f}(l(t),t)). \label{lt}
\end{align}

%

\begin{remark}
	For model validity, we assume that there exists a constant $L>0$ such that the ODE state $l(t)$ satisfies
	\begin{align} 
	0&< l(t) < L, \label{boundl}
	\end{align}
so that \eqref{rf},\eqref{rc}, and \eqref{lt} are well-defined for $x\in [0, L]$, $t\in [0, +\infty] $.
We emphasize that the  proposed control law    needs to guarantee the above condition.
\end{remark}

Our control objective is to stabilize both free and congested regime traffic $\rho_i(x,t)$ to uniform steady states $\rho_i^\star$ and at the same time, the moving interface $l(t)$ to a desirable static setpoint $l^\star$. Therefore, the shockwave becomes standstill within the freeway segment instead of moving upstream.

We consider the following controlled boundary condition for the nonlinear coupled PDE-ODE system consisting of \eqref{rf}, \eqref{rc}, and \eqref{lt}
\begin{align}
\rho_{\rm f} (0,t)  & = U_{\rm in}(t) + \rho_{\rm f}^\star,\label{b1}\\
\rho_{\rm c}(L,t) & = U_{\rm out}(t) + \rho_{\rm c}^\star \label{b2},
\end{align}
where we control the incoming and outgoing density variations of the freeway segment $U_{\rm in}(t)$ and $U_{\rm out}(t)$. As mentioned in Section \ref{intro}, the control of density can be realized with on-ramp metering actuating the  flux at both boundaries:
\begin{align}
  q_{\rm in}(t) =& Q(\rho_{\rm f} (0,t)),\\
  q_{\rm out}(t) =& Q(\rho_{\rm c} (L,t)).
\end{align}

\section{ Linearized Model}\label{linear}
Now, we linearize the coupled PDE-ODE model $(\rho_{\rm f}(x,t), \rho_{\rm c}(x,t), l(t))$-system  defined in \eqref{rf},\eqref{rc} and \eqref{lt} around steady states and setpoint $(\rho_{\rm f}^\star, \rho_{\rm c}^\star, l^\star)$. The constant equilibrium  setpoint values are chosen so that the following conditions that ensure the  model validity hold
\begin{align}
0&<\rho_{\rm f}^\star < \rho_{\rm jump} <\rho_{\rm c}^\star< \rho_{m},\label{md1}\\ 
0&< l^\star <L.\label{md2}
\end{align}            
At steady-state, the flux equilibrium needs to be achieved for both sides of the moving interface. Hence,
\begin{align}\label{c1}
\rho_{\rm f}^\star V(\rho_{\rm f}^\star) = \rho_{\rm c}^\star V(\rho_{\rm c}^\star). 
\end{align}
Using condition \eqref{c1},  the quadratic fundamental diagram yields that
\begin{align}
\rho_{\rm f}^\star + \rho_{\rm c}^\star = \rho_m.\label{md3}
\end{align}
Define the  state deviations from the system reference  as
\begin{align}
\tilde \rho_i(x,t) = & \rho_i(x,t) - \rho_i^\star, \\
X(t) =& l(t) -l^\star,
\end{align}
where  $\dot X(t) = \dot l(t)$ is satisfied. Thus, the linearized PDE-ODE model \eqref{rf}-\eqref{lt}  with the  boundary conditions \eqref{b1}  and \eqref{b2} around the system reference $(\rho_{\rm f}^\star, \rho_{\rm c}^\star, l^\star)$  is defined as the  following  $(\tilde \rho_{\rm f}(x,t), \tilde \rho_{\rm c}(x,t), X(t))$-system 
\begin{align}
\partial_t \tilde \rho_{\rm f}(x,t) =&  -u \partial_x \tilde \rho_{\rm f}(x,t), \quad x \in [0,l(t)] \label{lf}\\
\partial_t \tilde \rho_{\rm c}(x,t)  =& u \partial_x \tilde \rho_{\rm c}(x,t),  \quad \;\;\;\;\; x \in [l(t),L]  \label{lc}\\
\tilde \rho_{\rm f}(0,t) =& U_{\rm in}(t) , \label{Uin}\\
\tilde \rho_{\rm c}(L,t) =& U_{\rm out}(t) , \label{Uout}\\
\dot X(t)= & -b\left(\tilde \rho_{\rm f}(l(t),t) + \tilde \rho_{\rm c}(l(t),t)\right), \label{mi}
\end{align}
where the transport speed is defined as 
\begin{align}
	u =v_{m} \left(1 - \frac{2 \rho_{\rm f}^\star}{\rho_{m}} \right),
\end{align} and satisfy $0< u < v_m.$
The constant coefficient $b$ in ODE is defined as $b = \frac{v_m}{\rho_m} >0$. The linearized model \eqref{lf}-\eqref{mi} is a PDE-ODE coupled system with bilateral boundary control inputs from inlet and outlet.

\section{Predictor-based control design}
In this section, we first introduce the  equivalent delay system  representation to the system  \eqref{lf}-\eqref{mi}. Then,  a  backstepping transformation is applied to obtain predictor-based state feedback controls to compensate the PDE    state-dependent delays to the ODE.  
 
\subsection{From coupled  PDE-ODE   to delay system  representation}
The system   \eqref{lf}-\eqref{mi} can be represented by an unstable ODE with two distinct state-dependent input delays. Introduce the following state-dependent delays for the two transport PDEs
\begin{align}
D_{\rm f}(t) &= \frac{ l(t)}{u},\\
D_{\rm c}(t) &= \frac{L - l(t)}{u},
\end{align}
where $l(t) = X(t) + l^\star$.
The PDE states are represented by
\begin{align}
\tilde \rho_{\rm f} (l(t),t) =& U_{\rm in}\left(t - D_{\rm f}(t)\right),\label{inputdelay1}\\
\tilde \rho_{\rm c}(l(t),t) =& U_{\rm out}\left(t - D_{\rm c}(t)\right)\label{inputdelay2},
\end{align}
where $U_{\rm in}(t)$ and $U_{\rm out}(t)$ are the boundary control inputs  defined in \eqref{Uin} and \eqref{Uout}. Substituting \eqref{inputdelay1} and \eqref{inputdelay2}   into the ODE \eqref{mi}, the following state-dependent   input delay system representation is derived
\begin{align}
\dot X(t)= & -b\left(U_{\rm in}(t - D_{\rm f}(X(t))) + U_{\rm out}(t - D_{\rm c}(X(t))\right).
\end{align}
\begin{remark}If the position of the moving shock front is close to the inlet half segment such that $l(t) \in \left[0,\frac{L}{2}\right],$ it holds that $\forall t\in [0,\infty), D_{\rm f}(t) \leq D_{\rm c}(t)$. As a result, delayed inlet control input $U_{\rm in}\left(t - D_{\rm f}(t)\right)$ reaches the moving shock front faster than delayed outlet control input $U_{\rm out}\left(t - D_{\rm c}(t)\right)$. If $l(t) \in \left[\frac{L}{2}, L\right],$ $\forall t\in [0,\infty),  D_{\rm f}(t) \geq D_{\rm c}(t)$ holds. Then $U_{\rm out}\left(t - D_{\rm c}(t)\right)$ reaches the moving shock front faster than $U_{\rm in}\left(t - D_{\rm f}(t)\right)$.
\end{remark}
We introduce a new coordinate $z$ defined as
\begin{align}
z =
\begin{dcases}
\frac{l(t)-x}{u}, & \quad  x \in[0,l(t)],  \\
\frac{x - l(t)}{u}, & \quad x \in[l(t),L], \\
\end{dcases}  \label{ztox}
\end{align}
and new variables $\tilde\varrho_{\rm f}(z,t)$ and $\tilde\varrho_{\rm c}(z,t)$ defined in $z$-coordinate. The transformations between $\tilde \rho_{\rm f}(x,t),\tilde \rho_{\rm c}(x,t)$ and $\tilde \varrho(z,t),\tilde \varrho_{c}(z,t)$ are given by
\begin{align}
\tilde \varrho_{\rm f}(z,t) =& \tilde \rho_{\rm f}(l(t)-uz,t), \quad z \in [0,D_{\rm f}(t)], \label{vrhof}\\
\tilde \varrho_{\rm c}(z,t) =& \tilde \rho_{\rm c}(l(t)+uz,t),  \quad \; z \in [0,D_{\rm c}(t)], \label{vrhoc}
\end{align}
and the associated inverse transformations of \eqref{vrhof} and \eqref{vrhoc} are given by
\begin{align}
\tilde \rho_{\rm f}(x,t) =& \tilde \varrho_{\rm f}\left(\frac{l(t)-x}{u},t\right), \quad x \in [0,l(t)],\\
\tilde \rho_{\rm c}(x,t) =& \tilde \varrho_{\rm c}\left(\frac{x - l(t)}{u},t\right), \quad x \in [l(t),L].
\end{align}
Using \eqref{vrhof} and \eqref{vrhoc}, the original system \eqref{lf}-\eqref{mi} is rewritten in the new $z$-coordinate as
\begin{align}
	\partial_t \tilde \varrho_{\rm f}(z,t) =& \left(1 - \frac{\dot l(t)}{u}\right) \partial_z \tilde \varrho_{\rm f}(z,t), \; z\in[0,D_{\rm f}(t)], \label{vlf}\\
	\partial_t \tilde \varrho_{\rm c}(z,t)  =& \left(1 + \frac{\dot l(t)}{u}\right)  \partial_z \tilde \varrho_{\rm c}(z,t), \; z\in[0,D_{\rm c}(t)], \label{vlc}\\
	\tilde \varrho_{\rm f}(D_{\rm f}(t),t) =& U_{\rm in}(t) , \label{vUin}\\
	\tilde \varrho_{\rm c}(D_{\rm c}(t),t) =& U_{\rm out}(t) , \label{vUout}
\end{align}
with the ODE given by  
\begin{align}\dot X(t)= - b \left( \tilde\varrho_{\rm f}(0,t) + \tilde\varrho_{\rm c}(0,t) \right).\label{vll}\end{align}

\subsection{Predictor-based backstepping transformation}

We consider the following backstepping transformation, motivated by the predictor-based transformation for delay representation $\varrho_{\rm f}(z,t)$ and $\varrho_{\rm c}(z,t)$ defined in \eqref{vlf}-\eqref{vUout},
\begin{align}
	w_{\rm f}(z,t)=&\tilde \varrho_{\rm f}(z,t) - K_{\rm f} \Bigg( X(t)  - b\int_{0}^{z} \tilde \varrho_{\rm f}(\xi,t)d\xi  \Bigg. \nonumber \\
	&\left.- b\int_{0}^{\min\{D_{\rm c}(t),z\}}\!\!\!\!\!\!\!\!\!\!\!\!\!\!\tilde \varrho_{\rm c}(\xi,t)d\xi \right), \quad z\in[0,D_{\rm f}(t)], \label{vrhotf}\\
	w_{\rm c}(z,t)=&\tilde \varrho_{\rm c}(z,t) - K_{\rm c} \Bigg( X(t) - b\int_{0}^{z} \tilde \varrho_{\rm c}(\xi,t)d\xi \Bigg.  \nonumber   \\
&\left. - b\int_{0}^{\min\{D_{\rm f}(t),z\}}\!\!\!\!\!\!\!\!\!\!\!\!\!\! \tilde \varrho_{\rm f}(\xi,t)d\xi \right), \quad z\in[0,D_{\rm c}(t)]. \label{vrhotc}
\end{align}
where $K_{\rm f}, K_{\rm c} >0$ are positive constant gain kernels. 

The above transformation in the original PDE state variables $\rho_{\rm f}(x,t)$ for $x \in [0,l(t)]$ and $\rho_{\rm c}(x,t)$ for $x \in [l(t),L]$, is given by
\begin{align}
	w_{\rm f}(x,t)=&\tilde \rho_{\rm f}(x,t) - K_{\rm f} \Bigg( X(t) - \frac{b}{u}\int_{x}^{l(t)} \tilde \rho_{\rm f}(\xi,t)d\xi  \Bigg. \nonumber \\
	&\left.  - \frac{b}{u}\int_{l(t)}^{\min\{L,2l(t)-x\}}\!\!\!\!\!\!\!\!\!\!\!\tilde \rho_{\rm c}(\xi,t)d\xi \right),\quad x \in [0,l(t)],\label{opt1} \\
	w_{\rm c}(x,t)=&\tilde \rho_{\rm c}(x,t) - K_{\rm c} \Bigg( X(t) - \frac{b}{u}\int_{l(t)}^{x}  \tilde \rho_{\rm c}(\xi,t)d\xi \Bigg.  \nonumber \\&\left. -\frac{b}{u}\int_{\max\{0, 2 l(t)-x\}} ^{l(t)}\!\!\!\!\!\!\!\!\!\! \tilde \rho_{\rm f}(\xi,t)d\xi \right), \quad x \in [l(t),L] .\label{opt2}
\end{align}
\begin{itemize}
\item For the case $D_{\rm f}(t) \leq D_{\rm c}(t)$, it follows that $l(t) \in \left[0,\frac{L}{2}\right]$ and  the following holds 
\begin{align}
 x \in [0,l(t)] \implies \min\{L,2l(t)-x\} = 2l(t)-x.
\end{align}
\item For the case $D_{\rm f}(t) \geq D_{\rm c}(t)$, it follows that $l(t) \in \left[\frac{L}{2},L\right]$, the following holds 
\begin{align}
x \in [l(t),L] \implies \max\{0, 2 l(t)-x\} = 2l(t)-x.
\end{align}
\end{itemize}
Later on, two pairs of state feedback controllers are obtained respectively for  $l(t) \in \left[0,\frac{L}{2}\right]$ and $l(t) \in \left[\frac{L}{2},L\right]$.
The inverse transformation of \eqref{opt1},\eqref{opt2}  is given by
\begin{align}
	\tilde \rho_{\rm f}(x,t) =& w_{\rm f}(x,t)  + K_{\rm f} \Bigg( X(t) - \frac{b}{u}\int_{x}^{l(t)} w_{\rm f}(\xi,t)d\xi  \Bigg. \nonumber \\
	&\left.  - \frac{b}{u}\int_{l(t)}^{\min\{L,2l(t)-x\}}\!\!\!\!\!\!w_{\rm c}(\xi,t)d\xi \right),\; x \in [0,l(t)],\label{inv1} \\
	\tilde \rho_{\rm c}(x,t) =& w_{\rm c}(x,t) + K_c \Bigg( X(t) - \frac{b}{u}\int_{l(t)}^{x}  w_{\rm c}(\xi,t)d\xi \Bigg.  \nonumber \\&\left. -\frac{b}{u}\int_{\max\{0, 2 l(t)-x\}} ^{l(t)}\!\!\!\!\!\!\!\!\!\!\! w_{\rm f}(\xi,t)d\xi \right), \quad x \in [l(t),L].\label{inv2}
\end{align}
Let us  denote the above transformations as
\begin{align}
	\tilde \rho_{\rm f} &= \mathcal{T}_{\rm f}[w_{\rm f},w_{\rm c}], \\
	\tilde \rho_{\rm c} &= \mathcal{T}_{\rm c}[w_{\rm f},w_{\rm c}].
\end{align}
At the moving interface, we have
\begin{align}
w_{\rm f}(l(t),t) =& \tilde \rho_{\rm f}(l(t),t) - K_{\rm f} X(t), \label{mv1}\\
w_{\rm c}(l(t),t) =& \tilde \rho_{\rm c}(l(t),t) - K_{\rm c} X(t). \label{mv2}
\end{align}
Taking temporal and spatial derivative on both sides of \eqref{opt1},\eqref{opt2} and substituting into the PDE-ODE original system \eqref{lf}-\eqref{mi}, we obtain target system by $w_{\rm f}(x,t)$ and $w_{\rm c}(x,t)$,
\begin{align}
\!\partial_t w_{\rm f} + u\partial_x w_{\rm f}  
=& \frac{K_{\rm f}  b}{u} \dot{l}(t) (g(t) + 2\epsilon_c(x,t)),\; x\in [0,l(t)], \label{t1}\\
\!\partial_t w_{\rm c} - u\partial_x w_{\rm c} 
=& \frac{K_{\rm c} b}{u}\dot{l}(t) (g(t) - 2\epsilon_{\rm f}(x,t)), \; x\in [l(t),L], \label{t2}\\
w_{\rm f}(0,t)=& 0 , \label{bcf} \\
w_{\rm c}(L,t)=& 0 , \label{bcc}\\
\dot X(t)= & -a X(t) - b \left(w_{\rm c}(l(t),t) + w_{\rm f}(l(t),t)\right), \label{def_ode}
\end{align}
where the constant coefficient $a = b(K_{\rm f} + K_{\rm c}) > 0$ is obtained by substituting \eqref{mv1},\eqref{mv2} into \eqref{mi},
given $b, K_{\rm f}, K_{\rm c} >0$. The time-varying term $g(t)$ is defined as
 \begin{align}
g(t)=& (K_{\rm f} - K_{\rm c}) X(t) + w_{\rm f}(l(t),t)-w_{\rm c}(l(t),t),  \label{g}
	\end{align}
and the  space and time-varying terms $\epsilon_{\rm c}(x,t)$ and $\epsilon_{\rm f}(x,t)$ are given by
\begin{align}
\notag	\epsilon_{\rm c}(x,t) =& \tilde \rho_{\rm c}(2l(t)-x,t)\\
	 =& \mathcal{T}_{\rm c}[w_{\rm f},w_{\rm c}](2l(t)-x,t), \label{epsic}\\
\notag	\epsilon_{\rm f}(x,t) =& \tilde \rho_{\rm f}(2l(t)-x,t) \\
=& \mathcal{T}_{\rm f}[w_{\rm f},w_{\rm c}](2l(t)-x,t). \label{epsif} 
\end{align}
We assume that densities outside freeway segment $[0,L]$ are at steady states, therefore $\tilde \rho_{\rm c} (2l(t)-x,t) = 0$ when $2l(t)-x >L$, and $\tilde \rho_{\rm f} (2l(t)-x,t) = 0$ when $ 2l(t)-x <0$. Hence,  the followings hold for $\epsilon_{\rm f}(x,t)$ and $\epsilon_{\rm c}(x,t)$,
\begin{align}
\begin{dcases}
\epsilon_{\rm f}(x,t) = 0, \quad  l(t) \in  [0, L/2]\;\; {\rm and}\;\;  x \in [2l(t),L],\\
\epsilon_{\rm c}(x,t) = 0,   \quad  l(t) \in  [L/2, L]\; {\rm and}\;\; x \in [0, 2l(t)-L].
\end{dcases} 
\end{align}
Otherwise, $\epsilon_{\rm f}(x,t)$ and $\epsilon_{\rm c}(x,t)$ are given by expressions in \eqref{epsic} and \eqref{epsif}.
The bilateral state feedback boundary actuations for inlet and outlet of the segment are derived  from \eqref{opt1},\eqref{opt2} and \eqref{bcf},\eqref{bcc} as 
\begin{align}
U_{\rm in}(t) = & K_{\rm f} \Bigg( X(t) - \frac{b}{u}\int_{0}^{l(t)} \tilde \rho_{\rm f}(\xi,t)d\xi \Bigg. \nonumber \\
&\left. - \frac{b}{u}\int_{l(t)}^{\min\{L,2l(t)\}}\tilde \rho_{\rm c}(\xi,t)d\xi \right) , \label{controlf} \\
U_{\rm out}(t) = & K_{\rm c} \Bigg( X(t) - \frac{b}{u}\int_{l(t)}^{L} \tilde \rho_{\rm c}(\xi,t)d\xi\Bigg.  \nonumber \\
&\left.-\frac{b}{u}\int_{\max\{0, 2 l(t)-L\}} ^{l(t)} \tilde \rho_{\rm f}(\xi,t)d\xi   \right) \label{controlc}.
\end{align}
We obtain two pairs of controller designs for $l(t) \in \left[0,\frac{L}{2}\right]$ and $l(t) \in \left[\frac{L}{2},L\right]$, respectively.
When $l(t) \in \left[0,\frac{L}{2}\right]$, it holds true that $\min\{L,2l(t)\} = 2l(t), {\max\{0, 2 l(t)-L\}} = 0$ and  when $l(t) \in \left[\frac{L}{2},L\right]$ one gets $\min\{L,2l(t)\} = L, \max\{0, 2 l(t)-x\} = 2l(t)$.

In addition, when $l(t) = \frac{L}{2}$, controller integral forms become identical  for $l(t) \in \left[0,\frac{L}{2}\right]$ and $l(t) \in \left[\frac{L}{2},L\right]$:
\begin{align}
	U_{\rm in}(t) = &  K_{\rm f}\! \left(\! X(t)- \frac{b}{u}\int_{0}^{\frac{L}{2}} \!\!\tilde \rho_{\rm f}(\xi,t)d\xi - \frac{b}{u}\int_{\frac{L}{2}}^{L}\!\!\tilde \rho_{\rm c}(\xi,t)d\xi\right)\!, \\
U_{\rm out}(t) = & {K_{\rm c}}\!\left(\! X(t) - \frac{b}{u}\int_{0}^{\frac{L}{2}}\!\! \tilde \rho_{\rm f}(\xi,t)d\xi - \frac{b}{u}\int_{\frac{L}{2}}^{L}\!\! \tilde \rho_{\rm c}(\xi,t)d\xi \right)\! .
\end{align}
It is remarkable that the bilateral control input smoothly switches between the above control laws when the moving interface position passes through the middle of the freeway segment.

Due to the invertibility of the transformation in \eqref{opt1},\eqref{opt2}, stability of the target system $(w_{\rm c}(x,t), w_{\rm f}(x,t), X(t))$ and stability  the plant  $(\tilde \rho_{\rm f}(x,t), \tilde \rho_{\rm c}(x,t), X(t))$ are equivalent. In the next section, we apply Lyapunov analysis to prove the stability of the target system. Define the $H^1$-norm $||f(\cdot, t)||_{H^1_{[a,b]}}$ as
\begin{align}
||f(\cdot,t)||_{H^1_{[a,b]}} = \left(\int_{a}^{b} f^2(x,t) + f_x^2(x,t) dx\right)^{1/2}.
\end{align}
We now state the main result of the paper.
\begin{theorem}
	Consider a closed-loop system consisting of the PDE-ODE system \eqref{rf}-\eqref{lt} and the bilateral full-state feedback control laws for inlet and outlet   \eqref{controlf},\eqref{controlc}.
	 For any system reference $(\rho_{\rm f}^\star, \rho_{\rm c}^\star, l^\star)$ which satisfies conditions \eqref{md1},\eqref{md2} and \eqref{md3} , and for any given $L>0$,
	there exist $c>0,\  \gamma >0,\  \zeta>0 $ such that if the initial conditions of the system $(\rho_{\rm f}(x,0),\rho_{\rm c}(x,0),l(0))$ satisfy $Z(0) < \zeta$, local exponential stability of the closed-loop system with bilateral control laws holds $\forall t\in[0,\infty)$, namely,
	\begin{align}
	 Z(t) \leq c e^{- \gamma t} Z(0),
	\end{align}
	 where $Z(t)$ is defined as
	 	\begin{align}
	 \notag		Z(t) =& || \rho_{\rm f}(x,t) -\rho_{\rm f}^\star||_{H^1_{[0,l(t)]}} + || \rho_{\rm c}(x,t) -\rho_{\rm c}^\star||_{H^1_{[l(t),L]}}\\
	 &+ |l(t) - l^\star|^2,
	 \end{align}
	 and condition \eqref{boundl} is satisfied for model validity. 
\end{theorem}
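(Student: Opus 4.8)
The plan is to run a Lyapunov argument on the target system \eqref{t1}--\eqref{def_ode} rather than on the plant, since the paper has already shown that stability of $(w_{\rm f},w_{\rm c},X)$ and of $(\tilde\rho_{\rm f},\tilde\rho_{\rm c},X)$ are equivalent via the invertible map \eqref{opt1},\eqref{opt2}/\eqref{inv1},\eqref{inv2}. I would first record that $\mathcal T_{\rm f},\mathcal T_{\rm c}$ and their inverses are bounded operators whose integral parts \emph{lower} the differentiation order, so that for $l(t)\in[0,L]$ the combined $H^1$ quantity $\|\tilde\rho_{\rm f}\|^2_{H^1}+\|\tilde\rho_{\rm c}\|^2_{H^1}+X^2$ is equivalent, with constants independent of $t$, to $\Omega(t):=\|w_{\rm f}(\cdot,t)\|^2_{H^1_{[0,l(t)]}}+\|w_{\rm c}(\cdot,t)\|^2_{H^1_{[l(t),L]}}+X(t)^2$. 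It then suffices to prove exponential decay of $\Omega$, from which the stability estimate for $Z$ follows by this norm comparison.

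Next I would introduce the weighted functional
\begin{align*}
V(t) =\ & X(t)^2 + \frac{b_1}{2}\int_0^{l(t)} e^{-\delta x}\big(w_{\rm f}^2+(\partial_x w_{\rm f})^2\big)\,dx \\
& + \frac{b_2}{2}\int_{l(t)}^{L} e^{\delta x}\big(w_{\rm c}^2+(\partial_x w_{\rm c})^2\big)\,dx,
\end{align*}
with $b_1,b_2,\delta>0$ fixed later, the exponential weights being aligned with the transport directions $+u$ and $-u$ in \eqref{t1},\eqref{t2}. Differentiating $V$ and integrating by parts, the homogeneous inflow conditions \eqref{bcf},\eqref{bcc} kill the $x=0$ and $x=L$ boundary contributions and leave strictly negative in-domain damping $-u\delta\!\int e^{\mp\delta x}w_i^2$ together with interface terms $-b_1u\,e^{-\delta l}w_{\rm f}(l,t)^2$ and $-b_2u\,e^{\delta l}w_{\rm c}(l,t)^2$, whose coefficients are bounded away from zero on $[0,L]$. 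The ODE \eqref{def_ode} supplies $-2aX^2$ and the cross term $-2bX\big(w_{\rm f}(l,t)+w_{\rm c}(l,t)\big)$; I would absorb the latter into $-2aX^2$ and the interface damping by Young's inequality, choosing $b_1,b_2$ large relative to $b^2/a$ and $\delta$ small, so that the entire quadratic part of $\dot V$ is bounded by $-\mu V$ for some $\mu>0$. This must be carried out uniformly across the two regimes $l\le L/2$ and $l\ge L/2$: the estimates have identical form, the reflected-point sources $\epsilon_{\rm f},\epsilon_{\rm c}$ in \eqref{epsic},\eqref{epsif} vanish on the appropriate subintervals and are otherwise controlled by the bounded inverse map, and the controllers match continuously at $l=L/2$, so the constants are uniform and the switch causes no jump in the estimate.

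The decisive structural observation is that every remaining term in $\dot V$ carries a factor $\dot l(t)$: the Leibniz terms from the moving endpoints, the sources $\tfrac{K_{\rm f}b}{u}\dot l\,(g+2\epsilon_{\rm c})$ and $\tfrac{K_{\rm c}b}{u}\dot l\,(g-2\epsilon_{\rm f})$ in \eqref{t1},\eqref{t2}, their spatial derivatives entering the $\partial_x w_i$ estimate, and the compatibility boundary values $\partial_x w_{\rm f}(0,t)=S_{\rm f}(0,t)/u$ and $\partial_x w_{\rm c}(L,t)=-S_{\rm c}(L,t)/u$. Since \eqref{def_ode} gives $\dot l=\dot X=-aX-b\big(w_{\rm f}(l,t)+w_{\rm c}(l,t)\big)$, which is linear in the state so that $|\dot l|\le C\sqrt V$, while $g$ in \eqref{g}, the reflected quantities $\epsilon_{\rm f},\epsilon_{\rm c}$, and the boundary/derivative factors are each $O(\sqrt V)$ in the $H^1$ topology, the collected remainder satisfies $|R|\le C V^{3/2}$. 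Hence $\dot V\le -\mu V + C V^{3/2}$, and on the sublevel set $\{V\le(\mu/2C)^2\}$ one gets $\dot V\le -\tfrac{\mu}{2}V$; this set is forward invariant, yielding the a priori bound $V(t)\le e^{-\mu t/2}V(0)$ for sufficiently small initial data. Exponential decay of $\Omega$, and thus of $Z$, follows, and since $|X(t)|\le\sqrt{V(t)}$ stays small while $l^\star\in(0,L)$, the interface remains in $(0,L)$, so the validity condition \eqref{boundl} holds; this is exactly the local, small-$\zeta$ nature of the claim.

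I expect the main obstacle to be the $H^1$-level estimate, not the $L^2$ one. Concretely, bounding the spatial derivative of the sources $\partial_x S_i$ requires differentiating the reflected-argument terms $\epsilon_{\rm f},\epsilon_{\rm c}$, which through \eqref{epsic},\eqref{epsif} reintroduces $\partial_x$ of the inverse transform and must be controlled by the $H^1$ norm of $(w_{\rm f},w_{\rm c})$ on time-varying domains, and the compatibility values $S_i$ at the inflow ends produce additional boundary contributions. The bookkeeping here is heavy. What rescues the argument is precisely that each such term is proportional to $\dot l$ and is therefore genuinely cubic once $\dot l$ is resolved through \eqref{def_ode}; establishing the clean cubic bound $|R|\le C V^{3/2}$ rigorously in the $H^1$ topology is the crux, and it is what lets the negative quadratic damping dominate for small data.
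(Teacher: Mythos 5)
Your proposal follows essentially the same route as the paper: a weighted $H^1$ Lyapunov functional on the backstepping target system (the paper uses weights $e^{-x}$, $e^{x-L}$ and a parameter $\lambda$ on the derivative terms rather than your $e^{\mp\delta x}$ and $b_1,b_2$), the same key estimate $\dot V \le -\sigma V + \tau V^{3/2}$ obtained by exploiting that every non-dissipative term carries a factor $\dot l = O(\sqrt V)$, the same small-initial-data comparison/invariance argument, and the same verification of the validity condition \eqref{boundl} via smallness of $|X(t)|\le\sqrt{V(t)}$ (which the paper phrases as a contradiction in its Lemma 3). The only cosmetic differences are technical: the paper absorbs the interface cross term using Agmon's inequality and the choice $\lambda > 4b/(au)$, where you instead invoke Young's inequality against the moving-boundary damping terms.
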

\section{Proof of Theorem 1}
In the proof, the local stability of the closed-loop system in the $H^1$ sense is shown with Lyapunov analysis and the following condition of model validity  \eqref{boundl} is guaranteed by our control design.
The proof of Theorem 1 is established through following steps: we firstly prove the local stability of the target system \eqref{t1}-\eqref{def_ode} for a given time interval $\forall t\in[0,t^\star)$ under the assumption that condition \eqref{boundl} is satisfied. Then we prove that with initial conditions of states variables bounded, the local exponential stability of the above target system holds for $\forall t\in[0,\infty)$ with the assumption removed. This is achieved by comparison principle and contradiction proof in Lemma 3. In the end, the stability analysis of the target system leads to stability of the original PDE-ODE system in \eqref{rf}-\eqref{lt}.

Let us define the  Lyapunov functional
\begin{align}
V(t) =&  V_1(t) +  V_2(t) +\lambda V_3(t) + \lambda V_4(t) +  V_5(t), \label{LV}
\end{align}
where $\lambda>0$ with the component Lyapunov functions  
\begin{align}
V_1(t) =&  \int_{0}^{l(t)} e^{{-x}} w_{\rm f}^2(x,t)dx,\\
V_2(t) =&  \int^{L}_{l(t)} e^{x-L} w_{\rm c}^2(x,t)dx,\\
V_3(t) =&  \int_{0}^{l(t)} e^{-x} \partial_x w_{\rm f}^2(x,t)dx,\\
V_4(t) =& \int^{L}_{l(t)} e^{x-L}\partial_x w_{\rm c}^2(x,t)dx,\\
V_5(t) =& X(t)^2.
\end{align}	
\begin{lemma}
	Assume $\exists t^\star > 0$ such that the condition in \eqref{boundl} is satisfied, then there exists $\sigma > 0$ such that the following holds $\forall t\in[0,t^\star)$,
	\begin{align}
		\dot V(t) \leq -  \sigma V + \tau V^{3/2}. \label{ls}
	\end{align}
\end{lemma}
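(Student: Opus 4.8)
The plan is to differentiate the Lyapunov functional $V(t)$ term by term along the target system \eqref{t1}--\eqref{def_ode}, reduce each transport contribution to boundary terms plus a negative-definite volume term by integration by parts, and then observe that everything which is not manifestly dissipative carries an explicit factor of $\dot l(t)$ and is therefore cubic in the state, yielding the $\tau V^{3/2}$ remainder. First I would compute $\dot V_1$: since $l(t)$ is the moving upper limit, the Leibniz rule produces $\dot l(t)\,e^{-l(t)}w_{\rm f}^2(l(t),t)$, and substituting $\partial_t w_{\rm f}=-u\partial_x w_{\rm f}+\tfrac{K_{\rm f}b}{u}\dot l(g+2\epsilon_{\rm c})$ and integrating the transport term by parts with $w_{\rm f}(0,t)=0$ gives $-u\,e^{-l(t)}w_{\rm f}^2(l(t),t)-uV_1$ plus a source integral. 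Collecting the two interface terms yields the coefficient $(\dot l-u)$, strictly negative as long as $|\dot l|<u$, while $-uV_1$ is the dissipation I keep. The computation of $\dot V_2$ is symmetric via $w_{\rm c}(L,t)=0$, producing the negative interface coefficient $(-\dot l-u)$ and dissipation $-uV_2$.

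For the $H^1$ pieces $V_3,V_4$ I differentiate the $x$-derivative of the transport equations; because $g(t)$ is spatially constant the only source is $\tfrac{2K_{\rm f}b}{u}\dot l\,\partial_x\epsilon_{\rm c}$, and integration by parts leaves $-uV_3$, the good interface term $(\dot l-u)e^{-l}(\partial_x w_{\rm f}(l,t))^2$, and a fixed-boundary term $u(\partial_x w_{\rm f}(0,t))^2$. Evaluating the PDE at $x=0$ together with $\partial_t w_{\rm f}(0,t)=0$ gives $\partial_x w_{\rm f}(0,t)=\tfrac{K_{\rm f}b}{u^2}\dot l(g+2\epsilon_{\rm c}(0,t))$, so this term is of order $\dot l^2$ and hence even higher order than $V^{3/2}$.

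Next I handle the ODE. From $\dot V_5=2X\dot X=-2aX^2-2bX(w_{\rm f}(l,t)+w_{\rm c}(l,t))$, the quadratic $-2aX^2$ is the ODE dissipation and the cross term splits by Young's inequality into $aX^2$ and a multiple of $w_{\rm f}^2(l,t)+w_{\rm c}^2(l,t)$. These boundary squares must then be dominated by the strictly negative interface terms already collected in $\dot V_1,\dot V_2$; this is possible because $u-|\dot l|$ is bounded below near equilibrium and the coefficient $\tfrac{b^2}{a}=\tfrac{b}{K_{\rm f}+K_{\rm c}}$ can be made small through the design gains, so for a suitable $\lambda$ and the weights $e^{\mp x}$ the aggregate interface contribution together with the combined $X^2$ coefficient is a negative-definite form. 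Together with $-u(V_1+V_2)-\lambda u(V_3+V_4)-aX^2$ this produces the $-\sigma V$ term.

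Finally, every remaining term is a source integral carrying a factor $\dot l(t)$ times a product of states, $g$, or $\epsilon_{\rm c/f}$. Using the trace estimate $w_{\rm f}^2(l,t)=2\int_0^{l}w_{\rm f}\partial_x w_{\rm f}\,dx\le C(V_1+V_3)$ (and its analogue for $w_{\rm c}$) I bound $|\dot l|$ and $|g|$ by $C\sqrt V$, while the boundedness of the inverse backstepping maps $\mathcal T_{\rm f},\mathcal T_{\rm c}$ bounds $\|\epsilon_{\rm c}\|,\|\epsilon_{\rm f}\|$ and their $x$-derivatives by $C\sqrt V$; each source term is then a product of three $\sqrt V$ factors, i.e. $O(V^{3/2})$, and summing gives $\tau V^{3/2}$, establishing \eqref{ls}. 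I expect the main obstacle to be twofold: obtaining clean $V$-bounds on the reflected-coordinate states $\epsilon_{\rm c},\epsilon_{\rm f}$ and their derivatives (with the $\min/\max$ truncations) through the inverse transformation, and verifying that the interface boundary terms genuinely dominate the $V_5$ cross terms, which is where the gains, the weights and $\lambda$ must be tuned and where the restriction $|\dot l|<u$ is essential.
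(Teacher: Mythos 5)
Your overall strategy mirrors the paper's proof almost exactly: differentiate $V_1,\dots,V_5$ along \eqref{t1}--\eqref{def_ode}, integrate the transport terms by parts against the weights $e^{-x},e^{x-L}$, obtain $\partial_x w_{\rm f}(0,t)$ by evaluating the PDE at the fixed boundary (the paper's \eqref{x-w}), and classify every source carrying a factor $\dot l(t)$ as $O(V^{3/2})$ using $|\dot l|\le \delta\sqrt V$, trace estimates for $g$, and boundedness of the inverse transformations for $\epsilon_{\rm f},\epsilon_{\rm c}$ (the paper's \eqref{bdotl}, \eqref{gf}, \eqref{eta}, \eqref{eps}, \eqref{epsx}). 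The one step where you genuinely deviate is the absorption of the ODE cross term $-2bX\left(w_{\rm f}(l(t),t)+w_{\rm c}(l(t),t)\right)$, and that step, as you propose it, has a gap.

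You want to dominate the Young by-product $\tfrac{2b^2}{a}\left(w_{\rm f}^2(l(t),t)+w_{\rm c}^2(l(t),t)\right)$ by the negative interface terms $-(u-\dot l)e^{-l(t)}w_{\rm f}^2(l(t),t)$ and $-(u+\dot l)e^{l(t)-L}w_{\rm c}^2(l(t),t)$ from $\dot V_1,\dot V_2$. Those terms carry coefficients at most $u$ (the weights never exceed $1$, and drop to $e^{-L}$), so domination forces $\tfrac{2b^2}{a}=\tfrac{2b}{K_{\rm f}+K_{\rm c}}\lesssim u e^{-L}$, i.e.\ a lower bound on the design gains growing like $e^{L}$. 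But the lemma is stated, and the paper proves it, for \emph{arbitrary} $K_{\rm f},K_{\rm c}>0$; the gains are fixed data of the target system, not parameters the proof may tune, so your argument establishes the lemma only under an unstated gain condition. Your fallback ``for a suitable $\lambda$'' cannot repair this: $\lambda$ multiplies $V_3,V_4$, whose interface terms involve $\partial_x w$ at $l(t)$, not $w$, so no choice of $\lambda$ adds negativity in $w_{\rm f}^2(l,t)$ or $w_{\rm c}^2(l,t)$. The paper avoids the issue entirely: by Agmon's inequality with $w_{\rm f}(0,t)=w_{\rm c}(L,t)=0$ it bounds $w_{\rm f}^2(l,t)\le 4V_3$ and $w_{\rm c}^2(l,t)\le 4V_4$ (\eqref{wfl},\eqref{wcl}), turns the cross term into $\tfrac{4b}{a}(V_3+V_4)$ (\eqref{dv5}), and absorbs it into the dissipation $-\lambda u (V_3+V_4)$ with the purely analysis-side choice $\lambda>\tfrac{4b}{au}$, discarding the interface terms into the cubic remainder. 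Your route could be rescued without constraining the gains by putting a large constant weight $\mu$ on $V_1+V_2$ (the interface negativity then scales like $\mu u e^{-L}$ while the new sources remain cubic), but you did not do this. A smaller point: avoid invoking ``$u-|\dot l|$ bounded below near equilibrium''; the inequality \eqref{ls} must hold for all states satisfying \eqref{boundl}, so split $|\dot l|\,w^2(l,t)$ off into the $\tau V^{3/2}$ term instead, reserving smallness arguments for Lemma 2.
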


\begin{proof}		
Taking time derivative of the Lyapunov function $\eqref{LV}$ along the solution of the target system \eqref{t1}-\eqref{def_ode}, we have
	\begin{align}
\notag\!\!\dot V_1(t)\! =& - u \int_{0}^{l(t)}\!\!\! \!e^{{-x}} w_{\rm f}^2(x,t)dx-\!(u\!-\!\dot l(t))e^{-l(t)} \! w_{\rm f} ^2 (l(t),t) \\ \notag & + \frac{2K_{\rm f}b}{u}\dot l(t) g(t) \int_{0}^{l(t)}e^{-x}w_{\rm f}(x,t)dx\\  & + \frac{4K_{\rm f}b}{u}\dot l(t)  \int_{0}^{l(t)} e^{-x}\epsilon_{\rm c}(x,t)w_{\rm f}(x,t)dx , \label{dV1}	\\
\notag	\dot V_2(t)\! =&- u \!\int^{L}_{l(t)}\!\!\!  e^{x-L} w_{\rm c}^2(x,t)dx - (u\! +\! \dot l(t))e^{l(t)-L} \!  w_c ^2 (l(t),t)\\ \notag &+ \frac{2K_{\rm c} b}{u}\dot l(t)g(t) \int^{L}_{l(t)}e^{x-L}w_{\rm c}(x,t)dx \\  &- \frac{4K_{\rm c} b}{u}\dot l(t)\int^{L}_{l(t)} e^{x-L}  \epsilon_{\rm f}(x,t) w_{\rm c}(x,t)dx ,
\end{align}
\begin{align}
\notag	\dot V_3(t) =& -u  \int_{0}^{l(t)} e^{-x}\partial_x  w_f^2(x,t)dx \\ 
\notag &-(u-\dot l(t))e^{-l(t)}  \partial_x w_{\rm f}^2 (l(t),t) + {u}\partial_x  w_{\rm f} ^2(0,t) \\  &+\frac{4K_{\rm f} b}{u} \dot l(t)\int_{0}^{l(t)}e^{-x} \partial_x \epsilon_{\rm c}(x,t) \partial_x w_{\rm f}(x,t)dx, \label{dV3}\\
\notag	\dot V_4(t) =& -u  \int^{L}_{l(t)} e^{{x-L}} \partial_x  w_{\rm c}^2(x,t)dx \\ 
\notag&- (u + \dot l(t))e^{l(t)-L}  \partial_x w_{\rm c} ^2 (l(t),t) + u\partial_x  w_{\rm c}^2(L,t)\\&-\frac{4K_{\rm c}b}{u}\dot l(t)\int^{L}_{l(t)}e^{x-L} \partial_x \epsilon_{\rm c}(x,t) \partial_x w_{\rm c}(x,t)dx ,\\
 \dot V_5(t) =& - a X(t)^2- b\left(w_{\rm c}(l(t),t) + w_{\rm f}(l(t),t)\right) X(t). \label{dV5}
\end{align}
By Agmon's inequality, the followings hold
	\begin{align}
	w_{\rm f} ^2 (l(t),t) &\leq ||w_{\rm f}||_\infty^2 \leq 4 ||\partial_x w_{\rm f}||_2^2 = 4 V_3, \label{wfl}\\
	w_{\rm c} ^2 (l(t),t) &\leq	||w_{\rm c}||_\infty^2 \leq 4 ||\partial_x w_{\rm c}||_2^2 = 4  V_4 \label{wcl}. 
	\end{align}
Plugging the above inequalities into the ODE \eqref{def_ode} yields that there exists $\delta>0$ such that
\begin{align}
|\dot l(t) | &<  a\sqrt{V_5} + b(\sqrt{V_3} + \sqrt{V_4}) <\delta \sqrt{V}. \label{bdotl}
\end{align}
	Using Young's inequality, Cauchy-Schwarz inequality for \eqref{g} and \eqref{wfl},\eqref{wcl}, we have 
	\begin{align}
	g(t) ^2
	&\leq \mu_1 V_3 + \mu_2 V_4 +  \mu_3  V_5, \label{gf}
	\end{align}
	where $\mu_j>0, j = 1, 2, 3$.
By definition of $\epsilon_{\rm c}(x,t)$ in \eqref{epsic}, there exist $\eta_k>0,\ k = 1, 2, 4$ such that 
\begin{align}
\int_{0}^{l(t)}\epsilon_{\rm c}^2(x,t)dx  &\leq \eta_1 V_1 + \eta_2 V_2 + \eta_4 V_4. \label{eta}
\end{align}
It follows that
\begin{align}
\notag	\dot V_1(t) \leq &  -{u}V_1 + |\dot l(t)| w_{\rm f}^2 (l(t),t) \\
 \notag &+ \frac{2K_{\rm f} b}{u}|\dot l(t)|\left(g^2(t) + \int_{0}^{l(t)} w_{\rm f}^2(x,t) dx \right)  \\ &+\frac{4K_{\rm f} b}{u} |\dot l(t)| \left(\int_{0}^{l(t)}\!\!\!\!\epsilon_{\rm c}^2(x,t)dx + \int_{0}^{l(t)}\!\!\!\!w_{\rm f}^2(x,t)dx\right),
\end{align}
Plugging \eqref{wfl} and \eqref{bdotl}-\eqref{eta} into the above inequality, there exists $\kappa_1>0$ such that 
\begin{align}
\dot V_1(t) \leq & - {u}  V_1  + \kappa_1 V^{3/2}, \label{dv1}
\end{align}
Taking total time derivative of boundary condition \eqref{bcf} yields,
\begin{align}
\partial_x w_{\rm f}(0,t) =& \frac{K_{\rm f} b}{u^2}\dot l(t) (g(t) + 2\epsilon_{\rm c}(0,t)),\label{x-w}
\end{align}	
Given definition of $\epsilon_{\rm c}(x,t)$ in \eqref{epsic}, there exist $\nu_0,\  \nu>0$ such that 
\begin{align}
	\epsilon_{\rm c}(0,t) &< \nu_0 V,\label{eps}\\
\int_0^{l(t)}\partial_x \epsilon^2_{\rm c} (x,t) &< \nu V.\label{epsx}
\end{align}
Using Young's inequality and plugging \eqref{gf} and \eqref{eps} into \eqref{x-w}, we obtain that there exists $\theta>0$ such that
{\begin{align}
\partial_x w_{\rm f}^2(0,t) &\leq \frac{K_{\rm c} b}{u^2} |\dot l(t)|\left(g^2(t) +  4\epsilon_{\rm c}^2(0,t)\right)
< \theta V^{3/2}, \label{wf0}
\end{align}}
Plugging \eqref{wfl}, \eqref{bdotl}, \eqref{epsx} and \eqref{wf0} into \eqref{dV3}, we obtain that there exists $\kappa_3>0$ such that
\begin{align}
	\dot V_3(t) \leq& - {u} V_3  + \kappa_3 V^{3/2},\label{dv3}
\end{align}
In the same fashion, we could obtain that there exist $\kappa_2, \kappa_4>0$ such that
\begin{align}
\dot V_2(t) \leq & - u V_2  + \kappa_2 V^{3/2}, \label{dv2}\\
\dot V_4(t) \leq& - u V_4 + \kappa_4 V^{3/2},\label{dv4}
\end{align}
For the last Lyapunov component, the following holds
\begin{align}	\dot V_5(t) \leq &-  {a} V_5 + \frac{4b}{a} V_3 +  \frac{4b}{a} V_4.\label{dv5}
	\end{align}
Using inequalities \eqref{dv1} and  \eqref{dv3}-\eqref{dv5} into  \eqref{LV},  it follows  that
	\begin{align}
	\notag \dot V(t)  \leq & -  u  V_1  -  u  V_2  -\left(\lambda u - \frac{4b}{a}\right)V_3\\
    & -\left(\lambda u- \frac{4b}{a}\right)V_4 - { a} V_5 + \tau V^{3/2}.
	\end{align}
	where $\tau = \kappa_1+ \kappa_2+\lambda\kappa_3+\lambda\kappa_4 >0$.
We choose $\lambda$ such that 
	\begin{align}
   \lambda>  \frac{4b}{au}, 
	\end{align}
	thus it holds that for $\sigma = \min \left\{ u -\frac{4 b}{\lambda a} ,a \right\}$,
	\begin{align}
\dot V(t) \leq -  \sigma V + \tau V^{3/2}.
\end{align}
\end{proof}
	\begin{lemma}
	According to  \eqref{ls}, for any $\sigma_0$ such that $0<\sigma_0<\sigma$, there exists $\delta_0>0$ such that for any $V(0) < \delta_0$,
	\begin{align}
	\tau |V^{3/2}| < (\sigma - \sigma_0)V 
	\end{align}	
	and,  
	\begin{align}
	\dot V(t) \leq -\sigma_0 V. \label{Vt}
	\end{align} 
	By comparison principle, the exponential stability is satisfied that $\forall t\in[0,t^\star)$,
	\begin{align}
	V(t) \leq V(0) e^{-\sigma_0 t}< \delta_0.
	\end{align}
\end{lemma}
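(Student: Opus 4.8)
The plan is to exploit the differential inequality $\dot V \leq -\sigma V + \tau V^{3/2}$ established in Lemma~1 (equation \eqref{ls}) and to extract local exponential decay by confining the trajectory to a sufficiently small sublevel set of $V$, on which the superlinear term $\tau V^{3/2}$ is dominated by the linear dissipation $-\sigma V$. The essential observation is that the factor $\tau V^{1/2}$ multiplying $V$ in the nonlinear term can be made arbitrarily small by keeping $V$ small, so the net coefficient in front of $V$ stays strictly negative and bounded away from zero.

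Concretely, I would first fix $\sigma_0 \in (0,\sigma)$ and choose the threshold
\begin{align}
\delta_0 = \left(\frac{\sigma - \sigma_0}{\tau}\right)^2,
\end{align}
or any smaller positive number. Then, whenever $0 \le V < \delta_0$, one has $\tau V^{1/2} < \tau \delta_0^{1/2} = \sigma - \sigma_0$, whence
\begin{align}
\tau V^{3/2} = \bigl(\tau V^{1/2}\bigr) V < (\sigma-\sigma_0) V,
\end{align}
which is exactly the first displayed inequality of the lemma. Substituting this back into \eqref{ls} yields, on the region $\{V < \delta_0\}$, the purely linear estimate $\dot V \leq -\sigma V + (\sigma-\sigma_0)V = -\sigma_0 V$.

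The remaining, and slightly delicate, step is to verify that the trajectory never leaves $\{V < \delta_0\}$ on $[0,t^\star)$, since the implication ``$\dot V \le -\sigma_0 V$'' was derived only under the standing assumption $V < \delta_0$. I would close this loop by a continuity/invariance argument: assuming $V(0) < \delta_0$ and letting $T = \sup\{t \in [0,t^\star): V(s)<\delta_0 \ \forall s \in [0,t]\}$, continuity of $t \mapsto V(t)$ gives $T>0$, and on $[0,T)$ the conditional estimate $\dot V \le -\sigma_0 V$ holds, so $V$ is nonincreasing and $V(t) \le V(0) < \delta_0$ there; hence $V$ cannot attain the level $\delta_0$, forcing $T = t^\star$. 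The main obstacle is precisely this bootstrapping: the estimate is valid only where $V$ is small, so escape from the sublevel set must be ruled out before the decay can be invoked on all of $[0,t^\star)$.

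Finally, with $\dot V(t) \le -\sigma_0 V(t)$ now valid for every $t \in [0,t^\star)$, the comparison principle, equivalently integrating $\frac{d}{dt}\bigl(e^{\sigma_0 t} V(t)\bigr) \le 0$, gives $V(t) \le V(0) e^{-\sigma_0 t}$, and since $V(0) < \delta_0$ this also yields $V(t) < \delta_0$, which completes the lemma. I expect no further difficulty here, as this is a routine Grönwall/comparison step once invariance of the sublevel set has been secured.
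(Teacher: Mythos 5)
Your proposal is correct and follows essentially the same route as the paper: the paper's Lemma~2 is stated with its proof embedded in the statement (choose $\delta_0$ so the cubic term $\tau V^{3/2}$ is dominated by $(\sigma-\sigma_0)V$ on the sublevel set $\{V<\delta_0\}$, then apply the comparison principle). Your explicit choice $\delta_0 = \left(\frac{\sigma-\sigma_0}{\tau}\right)^2$ and, more importantly, your sublevel-set invariance argument (ruling out escape from $\{V<\delta_0\}$ before invoking the decay on all of $[0,t^\star)$) make rigorous a bootstrapping step that the paper passes over silently, since the paper's phrasing conflates the hypothesis $V(0)<\delta_0$ with the condition $V(t)<\delta_0$ needed at every time.
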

\begin{lemma}
 	If the initial conditions of the target system $(w_{\rm f}(x,0),w_{\rm c}(x,0), X(0))$ satisfy the following 
 	\begin{align}
 		V(0)\leq \min \{\delta_0, \delta_1 \}, \label{V0}
 	\end{align}
 	where the positive constant $\delta_1$ is defined as
 	\begin{align}
 		\delta_1 =  \min \left\{(L-l^\star)^2, l^\star\right\}.\label{d1}
 	\end{align}
Then Lyapunov functional inequality \eqref{Vt} and condition \eqref{boundl} hold for $t \in [0,\infty)$.
 \end{lemma}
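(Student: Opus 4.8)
The plan is to run a continuation (bootstrap) argument that removes the standing hypothesis of Lemma 1, namely that \eqref{boundl} holds. The potential circularity here is the crux: Lemmas 1 and 2 deliver the decay $\dot V \le -\sigma_0 V$ only on intervals where $0<l(t)<L$ is \emph{already} known, yet \eqref{boundl} is exactly what we are trying to establish for all time. I would resolve this by defining the maximal validity time $t^\star = \sup\{t\ge 0 : 0<l(s)<L \text{ for all } s\in[0,t]\}$ and proving $t^\star = \infty$.

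First I would record the one structural inequality that couples the Lyapunov functional to the interface position: since $V_5(t)=X(t)^2=(l(t)-l^\star)^2$ and every other component of $V$ is nonnegative, we have $(l(t)-l^\star)^2 = V_5(t)\le V(t)$. Hence controlling $V$ controls the distance of $l(t)$ from $l^\star$. In particular $V(0)\le\delta_1$ already gives $0<l(0)<L$, so $t^\star>0$ and the supremum is well posed. The hypothesis $V(0)\le\min\{\delta_0,\delta_1\}$ is designed to feed both mechanisms simultaneously: the $\delta_0$ part (from Lemma 2) keeps the cubic term $\tau V^{3/2}$ dominated so that $\dot V\le-\sigma_0 V$, while the $\delta_1$ part confines $l$ to the interior of $[0,L]$.

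Then, on the open interval $[0,t^\star)$ the hypothesis of Lemma 1 holds by definition of $t^\star$, so Lemma 2 applies and yields $V(t)\le V(0)\,e^{-\sigma_0 t}\le V(0)\le\delta_1$ for all $t\in[0,t^\star)$. Combining this with the coupling inequality gives $(l(t)-l^\star)^2\le\delta_1$ on $[0,t^\star)$, and by the choice of $\delta_1$ in \eqref{d1} this forces $|l(t)-l^\star|\le\sqrt{\delta_1}$ to stay strictly below both $L-l^\star$ and $l^\star$, so $l(t)$ is trapped in a compact subinterval of $(0,L)$ with a fixed margin to each endpoint. Thus $l$ cannot approach $0$ or $L$ as $t\uparrow t^\star$; this calibration of $\delta_1$ to \emph{both} distances $L-l^\star$ and $l^\star$ is the genuinely delicate point of the lemma.

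Finally I would close by contradiction. If $t^\star<\infty$, continuity of $l(t)$ (it solves the ODE \eqref{def_ode}, whose right-hand side is bounded on $[0,t^\star)$ by \eqref{bdotl}) together with $V(t^\star)\le\delta_1$ gives $0<l(t^\star)<L$ strictly; hence $l$ remains inside $(0,L)$ on a slightly larger interval $[0,t^\star+\varepsilon)$, contradicting the maximality of $t^\star$. Therefore $t^\star=\infty$, condition \eqref{boundl} holds for all $t\ge0$, and consequently Lemma 1 and Lemma 2 are valid on all of $[0,\infty)$, which establishes $\dot V(t)\le-\sigma_0 V(t)$ for every $t\ge0$ as claimed in \eqref{Vt}. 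The main obstacle is precisely the bootstrap: one must verify that the decay obtained on $[0,t^\star)$ is strong enough to strictly prevent $l$ from reaching the boundary, after which continuity and the comparison principle already invoked in Lemma 2 finish the argument.
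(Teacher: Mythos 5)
Your proof is correct and takes essentially the same route as the paper: both run a contradiction/continuation argument in which Lemma 2's comparison principle is applied on the interval where \eqref{boundl} is known to hold, and the chain $X^2(t)\le V(t)\le V(0)\le \delta_1$ together with the calibration of $\delta_1$ against both $(L-l^\star)^2$ and $(l^\star)^2$ traps $l(t)$ inside $(0,L)$, making a violation impossible. The only cosmetic difference is that you formulate it via a maximal validity time $t^\star=\sup\{t: 0<l(s)<L \text{ on } [0,t]\}$ and show $t^\star=\infty$, whereas the paper posits a first violation time $t^\star$ and contradicts the violation directly at that instant.
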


\begin{proof}
	We assume that there exists $t^\star > 0 $ such that  condition \eqref{boundl} is satisfied for $t \in [0,t^\star)$ but is violated at $t = t^\star$. Given \eqref{V0} and by comparison principle, the following inequality holds
	\begin{align}
		V(t^\star) \leq V(0) < \delta_1 \label{Vts}.
	\end{align}
	According to the definition of $V(t)$ in \eqref{LV}, we obtain that
	\begin{align}
	X^2(t^\star) &< V(t^\star).
	\end{align}
Combining \eqref{d1} and \eqref{Vts}, we have	
\begin{align}
	X^2(t^\star) &< \delta_1 =  \min \left\{(L-l^\star)^2, (l^\star)^2\right\}.\label{Xs}
\end{align}
Since $l(t^\star) = X(t^\star) + l^\star$ and $0<l^\star<L$, we obtain from \eqref{Xs} that
\begin{align}
	0 < l(t^\star) < L. \label{e2}
\end{align}
We conclude that \eqref{e2} contradicts the assumption that \eqref{boundl} is violated at $t = t^\star$. Therefore, the condition \eqref{boundl} is guaranteed for $t \in [0,\infty)$ when the initial condition $V(0)$ satisfies \eqref{V0}. This completes the proof Lemma 3.

Due to invertibility of the transformation in \eqref{opt1},\eqref{opt2},  we conclude that the system   \eqref{lf}-\eqref{mi} with control laws \eqref{controlf},\eqref{controlc} is locally exponentially stable in the $H^1$ norm, which completes the proof of Theorem 1. 
\end{proof}	

\section{Simulation}
\begin{figure}[t!]
	\centering
	\includegraphics[width=6.9cm]{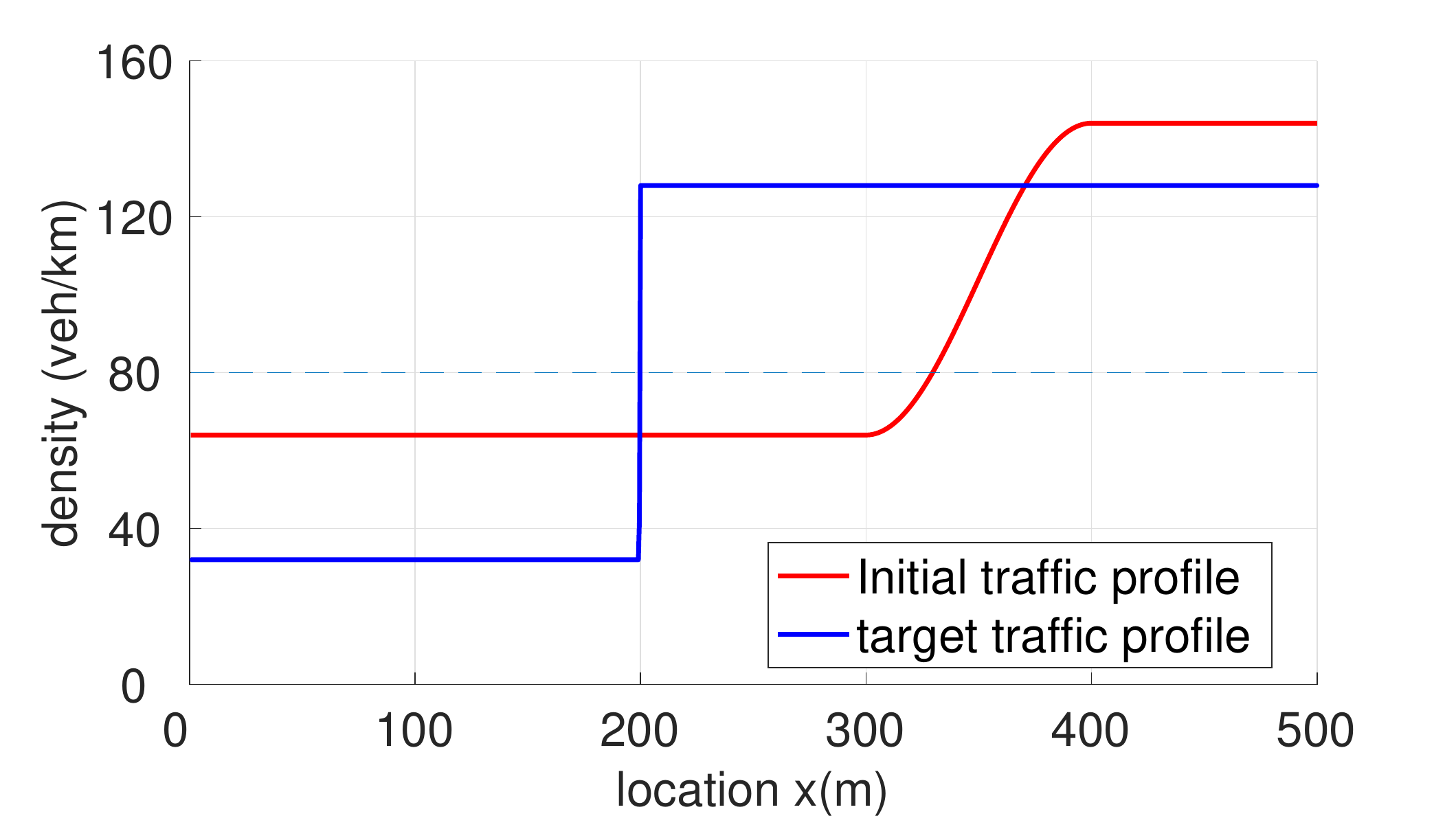}
	\caption{Traffic density profiles for initial condition with a soft shockwave and target system on freeway}\label{simu1}
\end{figure} 
\begin{figure}[t!]
	\centering
	\includegraphics[width=6.9cm]{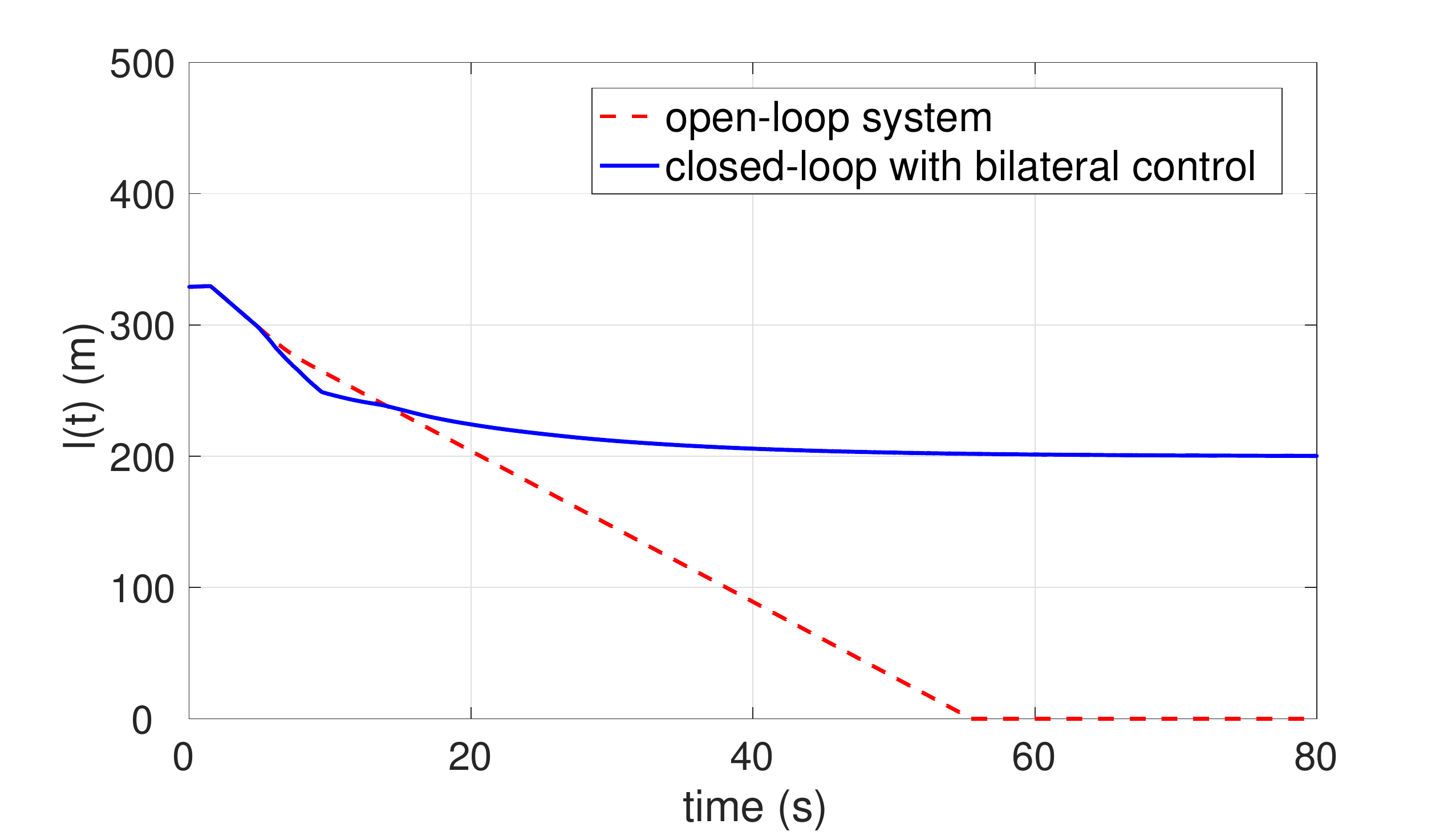}
	\caption{Evolution of the moving interface position $l(t)$ for open-loop system and for closed-loop system with bilateral boundary control}\label{simu2}
\end{figure} 
We simulate  proposed  control design considering a moving traffic shockwave in a $500$-meter freeway segment. The initial condition of the traffic profile and the desirable target traffic profile $\rho_{\rm f}^\star = 32 \; {\rm vehs/km}, \rho_{\rm c}^\star = 128 \; {\rm vehs/km} ,l^\star = 200\; {\rm m}, \rho_{\rm jump} = 80 \; {\rm vehs/km} $ are shown in Fig. \ref{simu1}, where the position of the shockwave front is initially located at $330$-meter and the final setpoint location is at $200$-meter. The initial position of the shockwave front is in the right-half plane of the segment while its final position is located at the left-half plane of the segment. The control objective is to regulate PDE states and ODE state from the initial profile to the reference profile. 

In Fig. \ref{simu2}, after around $40s$, the moving interface position stops at the setpoint location $l = 200$ m  with bilateral control  while in open-loop system it propagates upstream and travels out of the freeway segment before $1$ min. In Fig. \ref{simu3}, one can observe that  the bilateral control signals, the control inputs  also converge to zeros after around $40s$.
\begin{figure}[t!]
	\centering
\includegraphics[width=7cm]{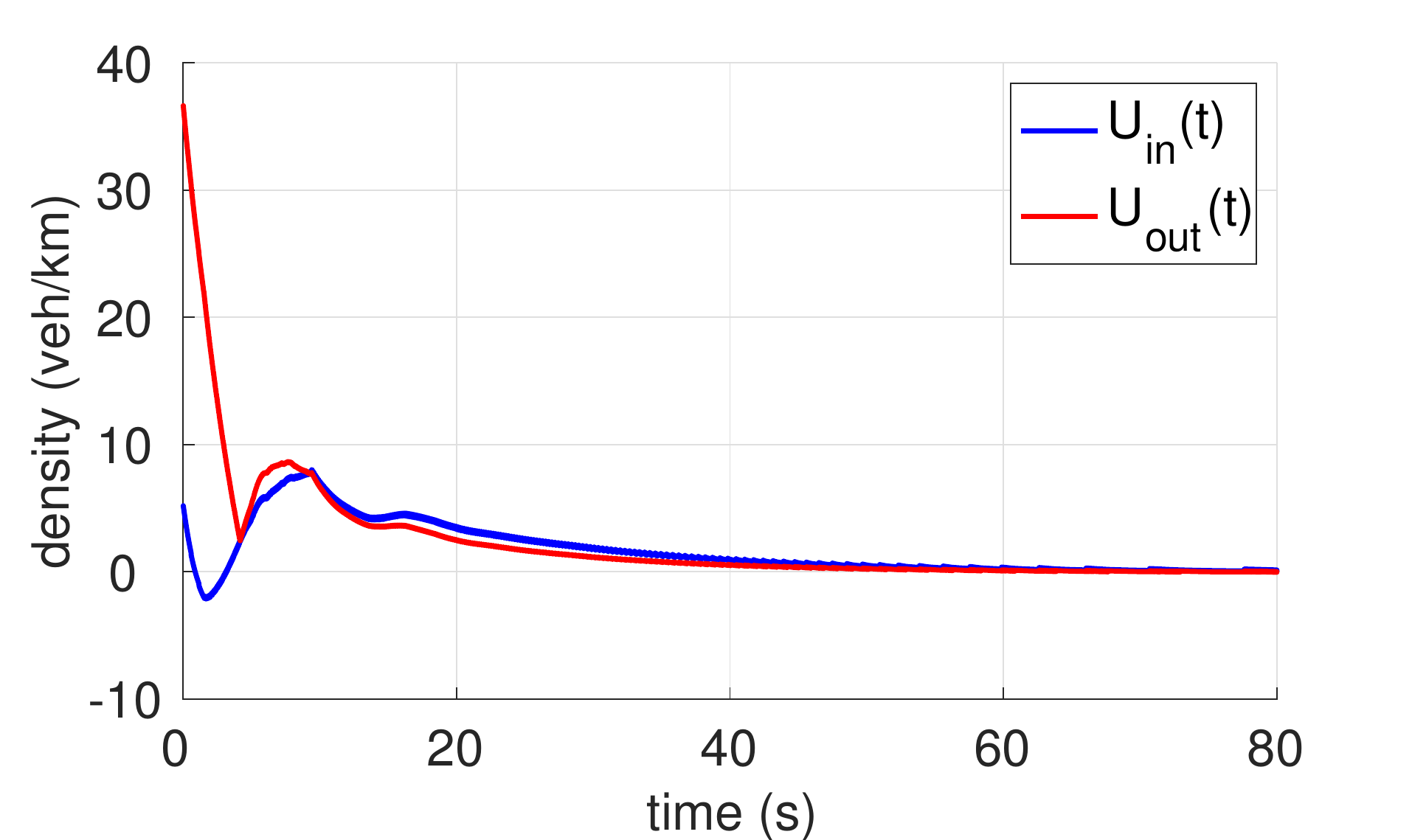}
	\caption{Evolution of bilateral control inputs over time}\label{simu3}
\end{figure} 

\section{Conclusion}
This paper addresses boundary feedback control problem of moving shockwave in congested traffic described by an PDE-ODE system. To stabilize the coupled system to a desired setpoint, we use predictor-based backstepping method to transform the state-dependent PDE-ODE coupled system to a target system, where the PDE state-dependent input delays to ODE are compensated by the bilateral boundary control inputs to PDEs.  Actuations of traffic densities at both boundaries are considered. The local exponential stability in $H^1$ norm is achieved and the model validity is guaranteed with the control designs. For future work, general theoretical results on multiple PDEs state-dependent input delays cascading to a nonlinear ODE is of authors' interest. 



\end{document}